\documentclass[12pt, a4paper]{article}

\usepackage{indentfirst}
\setlength{\parindent}{1em}
\usepackage[leqno]{amsmath}
\usepackage{amssymb}
\usepackage{amsthm}
\usepackage{enumerate}
\usepackage{cases}
\usepackage[center]{titlesec}
\usepackage{fancyhdr}
\usepackage[colorlinks, 
linkcolor=blue,
anchorcolor=blue, 
citecolor=red]{hyperref}

\numberwithin{equation}{section} 
\newtheorem{lem}{Lemma}[section]
\newtheorem{thm}[lem]{Theorem}

\addtolength{\hoffset}{-1cm}
\addtolength{\textwidth}{2cm}

\title{Hardy type inequalities on manifolds with nonnegative Ricci curvature}
\date{}

\begin{document}
	\author{Yuxin Dong, Hezi Lin, Lingen Lu}
	\maketitle

	\begin{abstract}
		ABSTRACT. We prove the Heisenberg-Pauli-Weyl inequality, Hardy-Sobolev inequality, and Caffarelli-Kohn-Nirenberg (CKN) inequality on manifolds with nonnegative Ricci curvature and Euclidean volume growth, of dimension $n\ge3$.
	\end{abstract}
	
	\section{Introduction}
	Let $n\ge 3$, the classical Hardy inequality in Euclidean space $\mathbb{R}^n$ states that 
	$$
	\Big(\frac{n-p}{p}\Big)^p\int_{\mathbb{R}^n}\frac{|u|^p}{|x|^p}~dx\le
	\int_{\mathbb{R}^n}|Du|^p~dx,
	$$ 
	where $1<p<n$ and $u\in C_c^\infty(\mathbb{R}^n)$. Moreover, the constant $\Big(\frac{n-p}{p}\Big)^p$ is sharp and never attained. In order to study the stability of some special semilinear elliptic equations, Brezis and V\'azquez \cite{BV} improve the Hardy inequality with a sharp remainder term for bounded domains. The remainder term problem of hardy inequality has attracted much attention, we refer the reader to \cite{GGM, GM1, GM2}. The hardy inequality on manifolds has also been well investigated, such as, \cite{KMM, LW, KO, BMV, FLL, CM}.

	Interpolating between the Hardy inequality and Sobolev inequality, one can obtain the following Hardy-Sobolev inequality 
	$$
	C_{HS}(s,p)(\int_{\mathbb{R}^n}\frac{|u|^{\frac{n-s}{n-p}p}}{|x|^s})^{\frac{n-p}{n-s}}
	\le\int_{\mathbb{R}^n}|Du|^p,\quad\forall u\in C_c^\infty(\mathbb{R}^n),
	$$ 
	where $1<p<n$, $0\le s\le p$ and $C_{HS}(s,p)>0$ is the sharp constant depending on $p$ and $s$. Note that, when $s=0$ (resp., $s=p$), the Hardy-Sobolev inequality is just the Sobolev (resp., Hardy) inequality. For the sharp constant of $C_{HS}(s,p)$, one can refer to \cite{L}.

	To generalize many important functional inequalities in Euclidean space, Caffarelli, Kohn and Nirenberg gave 
	$$
	C(a,b)(\int_{\mathbb{R}^n} |x|^{-bp}|u|^p)^\frac2p\le \int_{\mathbb{R}^n} |x|^{-2a}|Du|^2,
	$$ 
	where $-\infty<a<\frac{n-2}{2}, a\le b\le a+1$, $p=\frac{2n}{n-2+2(b-a)}$, and $C(a,b)>0$ is the sharp constant depending on $a$ and $b$. For more about the sharp constant $C(a,b)$, we refer to \cite{CC, CW}. When $0\le a<\frac{n-2}{2}, a\le b<a+1$, metric and topological rigidity theorems have been obtained in  \cite{dCC} for complete noncompact manifolds of nonnegative Ricci curvature supporting the above sharp CKN inequality.

	The Hardy inequality, Hardy-Sobolev inequality, and CKN inequality in Euclidean spaces constitute essential tools in the analysis and the study of PDEs. Here, we are going to generalize them to a manifold with nonnegative Ricci curvature. Inspired by the ABP method in Cabr\'e \cite{Ca}, Brendle \cite{Br} gave a sharp isoperimetric inequality in manifolds with nonnegative Ricci curvature, which is an important progress in this direction. Through the sharp isoperimetric inequality developed in \cite{Br}, Balogh and Krist\'aly \cite{BK} gave the P\'olya-Szeg\"o inequality in manifolds with nonnegative Ricci curvature. With the P\'olya-Szeg\"o inequality in \cite{BK} and some symmetrization techniques, we give the Heisenberg-Pauli-Weyl inequality, Hardy-Sobolev inequality, and CKN inequality in manifolds with nonnegative Ricci curvature.
	
	\section{Preliminaries}
	Let $(M^n,g), n\ge3$ be a complete noncompact Riemannian manifold with nonnegative Ricci curvature. Denote the asymptotic volume ratio of $M$ by
	$$
	\theta:=\lim_{r\to+\infty}\frac{|\{q\in M|d(o,q)<r\}|}{\omega_nr^n},
	$$
	where $o$ is an arbitrary fixed point in $M$ and $\omega_n$ is the unit ball volume in $\mathbb{R}^n$. The Bishop-Gromov volume comparison theorem implies that $\theta\le1$. If $\theta>0$, then we say that $M$ has Euclidean volume growth.
	
	Now, we follow the notation in \cite{BK}. Let $u:M\to\mathbb{R}$ be a fast decaying function, i.e. $|\{q\in M:|u(q)|>t\}|<\infty$ for every $t>0$. For an arbitrary fast decaying function, we associate its Euclidean rearrangement function $u^\star:\mathbb{R}^n\to[0,\infty)$, which is radially symmetric, nonincreasing in $|x|$, and defined by
	$$
|	\{x\in\mathbb{R}^n: u^\star(x)>t\}|=|\{q\in M:|u(q)|>t\}|.
	$$
	for every $t>0$. By the above equation and the layer cake representation, it is easy to show that
	$$
	\|u\|_{L^p(M)}=\|u^\star\|_{L^p(\mathbb{R}^n)},\quad p\in[1,\infty).
	$$
	Through the sharp isoperimetric inequality in \cite{Br}, Balogh and Krist\'aly gave the following P\'olya-Szeg\"o inequality.

	\begin{thm}{\cite{BK}}\label{PS}
		Let $(M^n,g)$ be a complete noncompact Riemannian manifold with nonnegative Ricci curvature and $u:M\to\mathbb{R}$ be a fast decaying function such that $|Du|\in L^p(M),p>1$. Then one has	
		$$
		\|Du\|_{L^p(M)}\ge\theta^{\frac1n}\|Du^\star\|_{L^p(\mathbb{R}^n)},
		$$
		where $\theta$ denote the asymptotic volume ratio of $M$.
	\end{thm}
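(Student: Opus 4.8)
The plan is to reproduce, in the curved setting, the classical level-set proof of the Pólya–Szegő principle, using Brendle's sharp isoperimetric inequality as the single geometric input that injects the factor $\theta^{1/n}$. The organizing object is the distribution function $\mu(t):=|\{q\in M:|u(q)|>t\}|$, which by the defining property of the rearrangement equals $|\{x\in\mathbb{R}^n:u^\star(x)>t\}|$; since $u^\star$ is radial and nonincreasing, each Euclidean superlevel set $\{u^\star>t\}$ is a ball of volume $\mu(t)$ on whose boundary $|Du^\star|$ is constant.

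First I would apply the coarea formula on $M$ to write
$$
\int_M |Du|^p \, dV_g = \int_0^\infty \Big( \int_{\{|u|=t\}} |Du|^{p-1} \, d\mathcal{H}^{n-1}\Big)\, dt,
$$
and to record that $-\mu'(t)=\int_{\{|u|=t\}}|Du|^{-1}\,d\mathcal{H}^{n-1}$ for a.e.\ $t$. Applying Hölder's inequality on each level set, with exponents $p$ and $p/(p-1)$, to the splitting $1=|Du|^{(p-1)/p}\cdot|Du|^{-(p-1)/p}$ gives
$$
P(\{|u|>t\})^p \le \Big(\int_{\{|u|=t\}}|Du|^{p-1}\,d\mathcal{H}^{n-1}\Big)\,(-\mu'(t))^{p-1},
$$
so the inner integral is bounded below by $P(\{|u|>t\})^p/(-\mu'(t))^{p-1}$.

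Next I would invoke Brendle's sharp isoperimetric inequality $P(\Omega)\ge n\omega_n^{1/n}\theta^{1/n}|\Omega|^{(n-1)/n}$ with $\Omega=\{|u|>t\}$ to replace $P(\{|u|>t\})$ by $n\omega_n^{1/n}\theta^{1/n}\mu(t)^{(n-1)/n}$. On the Euclidean side the isoperimetric inequality is an equality for the balls $\{u^\star>t\}$, and a direct computation using the constancy of $|Du^\star|$ on spheres yields $|Du^\star|=n\omega_n^{1/n}\mu(t)^{(n-1)/n}/(-\mu'(t))$, whence
$$
\int_{\mathbb{R}^n}|Du^\star|^p\,dx=\int_0^\infty \frac{(n\omega_n^{1/n})^p\,\mu(t)^{p(n-1)/n}}{(-\mu'(t))^{p-1}}\,dt.
$$
Comparing the two integrands termwise and factoring out $\theta^{p/n}$ gives $\int_M|Du|^p\,dV_g\ge\theta^{p/n}\int_{\mathbb{R}^n}|Du^\star|^p\,dx$, and taking $p$-th roots produces the claim.

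The main obstacle is the measure-theoretic justification of this outline rather than the algebra. The function $\mu$ is only monotone, hence merely differentiable a.e., and the coarea identities hold cleanly only at noncritical values, so the contribution of the critical set $\{Du=0\}$ and of the intervals where $\mu$ is constant (plateaus of $|u|$) must be shown not to spoil the estimate. I expect to handle this by first proving the inequality for Lipschitz (or smooth) fast-decaying $u$, where Sard's theorem makes almost every $t$ a regular value and $\{|u|=t\}$ a smooth hypersurface, and then passing to the general case by approximation together with the lower semicontinuity of $\|D(\cdot)\|_{L^p}$ along the rearrangement. Additional care is needed at values of $t$ where $-\mu'(t)$ vanishes or is infinite, where the quotients above must be read in the natural limiting sense.
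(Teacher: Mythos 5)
The paper states this theorem as a quotation from Balogh--Krist\'aly \cite{BK} and supplies no proof of its own, so there is no internal argument to compare against; your outline correctly reconstructs the standard proof used in the cited source: coarea plus H\"older on level sets, Brendle's sharp isoperimetric inequality $P(\Omega)\ge n\omega_n^{1/n}\theta^{1/n}|\Omega|^{(n-1)/n}$ as the sole geometric input producing the factor $\theta^{1/n}$, and the explicit radial computation of $\int_{\mathbb{R}^n}|Du^\star|^p$ via the common distribution function. The measure-theoretic caveats you flag at the end (the critical set $\{Du=0\}$, plateaus of $|u|$, and the possible failure of absolute continuity of $\mu$) are precisely the points that need care, and modulo carrying those out your proposal is sound and matches the approach of \cite{BK}.
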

	
	Now, we give two simple rearrangement inequalities for later use.
	\begin{lem}\label{sym1}
 		Let $P:(0,+\infty)\to(0,+\infty)$ be a nonincreasing continuous function and $u\in C_c^\infty(M)$. Then we have
 		$$
		\int_M |u|^pP(r)\le\int_{\mathbb{R}^n}(u^\star)^p P(|x|),
		\quad p\in[1,\infty),
		$$
		where $r(q)=d(o,q)$ is the distance function in $M$.
	\end{lem}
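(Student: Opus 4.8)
The plan is to reduce the weighted integral inequality to a purely measure-theoretic comparison of superlevel sets and then to close it with the Bishop--Gromov volume comparison. First I would note that, since $t\mapsto t^p$ is increasing on $[0,\infty)$, the function $(u^\star)^p$ is precisely the symmetric decreasing Euclidean rearrangement of $|u|^p$: for every $t>0$,
$$
|\{(u^\star)^p>t\}|=|\{u^\star>t^{1/p}\}|=|\{|u|>t^{1/p}\}|=|\{|u|^p>t\}|,
$$
where the first two sets live in $\mathbb{R}^n$ and the last two in $M$. Consequently the superlevel sets $A_t:=\{q\in M:|u(q)|^p>t\}$ and $B_t:=\{x\in\mathbb{R}^n:(u^\star)^p(x)>t\}$ have equal measure, and each $B_t$ is a Euclidean ball centred at the origin. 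Since $u\in C_c^\infty(M)$, every $A_t$ has finite measure, so all the quantities below are well defined.

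Next I would apply the layer-cake representation to both sides, writing
$$
\int_M|u|^pP(r)=\int_0^\infty\Big(\int_{A_t}P(r)\,dq\Big)dt,\qquad \int_{\mathbb{R}^n}(u^\star)^pP(|x|)=\int_0^\infty\Big(\int_{B_t}P(|x|)\,dx\Big)dt.
$$
It therefore suffices to prove, for each fixed $t$, the slice inequality $\int_A P(r)\,dq\le\int_B P(|x|)\,dx$, where $A:=A_t$ is an arbitrary finite-measure subset of $M$ and $B:=B_t$ is the concentric Euclidean ball of the same volume.

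To treat a single slice I would run a second layer-cake decomposition, this time on the weight. Since $P$ is nonincreasing, for each $s>0$ the set $\{\rho>0:P(\rho)>s\}$ is an interval $(0,\lambda(s))$ with $\lambda(s):=\sup\{\rho:P(\rho)>s\}$, whence $P(r)=\int_0^\infty\mathbf{1}_{\{r<\lambda(s)\}}\,ds$. Inserting this and using Fubini's theorem (legitimate because all integrands are nonnegative) gives
$$
\int_A P(r)\,dq=\int_0^\infty|A\cap B^M_{\lambda(s)}|\,ds,\qquad \int_B P(|x|)\,dx=\int_0^\infty|B\cap B^{\mathbb{R}^n}_{\lambda(s)}|\,ds,
$$
where $B^M_\mu$ and $B^{\mathbb{R}^n}_\mu$ denote the balls of radius $\mu$ about $o\in M$ and about the origin in $\mathbb{R}^n$. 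Two concentric Euclidean balls intersect in the smaller one, so the second integrand equals $\min\big(|B|,\omega_n\lambda(s)^n\big)=\min\big(|A|,\omega_n\lambda(s)^n\big)$. The decisive step is then the pointwise (in $s$) bound
$$
|A\cap B^M_{\lambda(s)}|\le\min\big(|A|,\,|B^M_{\lambda(s)}|\big)\le\min\big(|A|,\,\omega_n\lambda(s)^n\big),
$$
whose second inequality is exactly the Bishop--Gromov comparison $|B^M_\mu|\le\omega_n\mu^n$ (the monotonicity giving $\theta\le1$ recorded in the Preliminaries). Integrating in $s$ yields the slice inequality, and integrating in $t$ proves the lemma.

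The only genuinely nontrivial ingredient is this final volume-comparison estimate; everything else is bookkeeping with the two layer-cake identities and Fubini. I therefore expect the main obstacle to be conceptual rather than computational: recognizing that a decreasing radial weight is maximized, at fixed volume, by concentrating mass near the pole, which is precisely what Bishop--Gromov quantifies. The write-up should be short, with care needed only in justifying the two interchanges of integration and in the harmless treatment of the endpoint $r=\lambda(s)$, which has measure zero.
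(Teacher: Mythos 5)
Your proof is correct and follows essentially the same route as the paper's: a double layer-cake decomposition (in the level $t$ of $|u|^p$ and the level $s$ of the weight $P$), reducing everything to the pointwise bound $|A_t\cap\{P(r)>s\}|\le\min(|A_t|,|\{P>s\}|_M)\le\min(|B_t|,|\{P>s\}|_{\mathbb{R}^n})=|B_t\cap\{P(|x|)>s\}|$, where the last equality uses that the two Euclidean sets are concentric balls and the middle inequality is Bishop--Gromov. The only cosmetic difference is that you parametrize the superlevel set of $P$ as the ball of radius $\lambda(s)$ rather than writing it as $\{P>s\}$ directly.
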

	
	\begin{proof}
		Through the layer cake representation, we have
		$$
		\begin{aligned}
		&\int_M |u|^pP(r)=\int_0^\infty\int_0^\infty\int_M\chi_{\{|u|^p> t\}}\chi_{\{P(r)> s\}}~dtds,\\
		&\int_{\mathbb{R}^n}(u^\star)^p P(|x|)=\int_0^\infty\int_0^\infty\int_{\mathbb{R}^n}\chi_{\{(u^\star)^p> t\}}\chi_{\{P(|x|)> s\}}~dtds,
		\end{aligned}
		$$
		where $\chi$ is the characteristic function of domain. The Bishop-Gromov volume comparison theorem and the decreasing of $P$ implies
		$$
		|\{q\in M|P(r(q))> s\}|\le|\{x\in\mathbb{R}^n|P(|x|)>s\}|,
		$$
		for fixed $s>0$. Note that $\{x\in\mathbb{R}^n|(u^\star)^p>t\}$ and $\{x\in\mathbb{R}^n|P(|x|)>s\}$ is two ball with same center, for fixed $t,s>0$. Then we have
		$$
		\int_M\chi_{\{|u|^p> t\}}\chi_{\{P(r)> s\}}\le
		\int_{\mathbb{R}^n}\chi_{\{(u^\star)^q> t\}}\chi_{\{P(|x|)> s\}}
		$$
		for $t,s>0$, which complete the proof.
	\end{proof}
	
	\begin{lem}\label{sym2}
		Let $P:(0,+\infty)\to(0,+\infty)$ be an increasing continuous function and $u\in C_c^\infty(M)$. Then we have
		$$
		\int_M |u|^pP(r)\ge\int_{\mathbb{R}^n}(u^\star)^p P(|x|),
		\quad p\in[1,\infty),
		$$		
		where $r(q)=d(o,q)$ is the distance function in $M$.
	\end{lem}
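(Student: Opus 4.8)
The plan is to mirror the proof of Lemma~\ref{sym1}, reversing every inequality because $P$ is now increasing rather than decreasing. First I would write both sides through the layer cake representation, exactly as in the previous lemma: since $|u|^p$ and $P(r)$ are nonnegative, I can express
\[
\int_M |u|^p P(r)=\int_0^\infty\!\!\int_0^\infty\!\int_M \chi_{\{|u|^p>t\}}\chi_{\{P(r)>s\}}\,dt\,ds,
\]
and likewise for the rearranged integral over $\mathbb{R}^n$. The entire problem then reduces to comparing, for each fixed pair $t,s>0$, the inner integrals $\int_M \chi_{\{|u|^p>t\}}\chi_{\{P(r)>s\}}$ and $\int_{\mathbb{R}^n}\chi_{\{(u^\star)^p>t\}}\chi_{\{P(|x|)>s\}}$.

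The key geometric input is the behavior of the superlevel sets of $P$. Because $P$ is increasing, the set $\{P(r)>s\}$ is the complement of a metric ball, namely $\{q\in M: r(q)>\rho_s\}$ for a suitable radius $\rho_s$, and $\{P(|x|)>s\}=\{x:|x|>\rho_s\}$ is the complement of the Euclidean ball of the same radius. The second step is therefore to estimate the integral of $\chi_{\{|u|^p>t\}}$ over the exterior of a ball. Writing $A_t^M=\{|u|^p>t\}$ with Euclidean counterpart $A_t^\star=\{(u^\star)^p>t\}$ a centered ball of the same volume, I would split
\[
\int_M \chi_{A_t^M}\chi_{\{r>\rho_s\}}=|A_t^M|-\int_M\chi_{A_t^M}\chi_{\{r\le\rho_s\}}.
\]
Here $|A_t^M|=|A_t^\star|$ by the defining property of the rearrangement, so the two volumes cancel in the comparison, and I only need to show that the mass of $A_t^M$ inside the ball $\{r\le\rho_s\}$ is at least the mass of the centered ball $A_t^\star$ inside $\{|x|\le\rho_s\}$. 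This last inequality is precisely the content established in Lemma~\ref{sym1}: with the nonincreasing indicator $\chi_{\{r\le\rho_s\}}$ playing the role of the decreasing weight $P$, one has $\int_M\chi_{A_t^M}\chi_{\{r\le\rho_s\}}\ge\int_{\mathbb{R}^n}\chi_{A_t^\star}\chi_{\{|x|\le\rho_s\}}$.

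The main obstacle, and the place where the Bishop--Gromov comparison genuinely enters, is the direction of the volume estimate for the ball itself. For the decreasing case one used $|\{P(r)>s\}|\le|\{P(|x|)>s\}|$, which is the natural output of Bishop--Gromov (metric balls in $M$ have volume at most their Euclidean model). In the present increasing case the relevant sublevel set is the exterior of a ball, so I must instead verify that the mass of the fixed superlevel set $A_t^M$ that lands inside a ball of radius $\rho_s$ dominates the Euclidean analogue. I expect to obtain this by applying Lemma~\ref{sym1} to the truncated weight $\min\{P(r),s\}$, or equivalently by carrying out the complementation above and invoking Lemma~\ref{sym1} directly; either route converts the increasing weight into a bounded decreasing deficit whose comparison is already in hand. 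Assembling the pointwise-in-$(t,s)$ inequality $\int_M\chi_{A_t^M}\chi_{\{P(r)>s\}}\ge\int_{\mathbb{R}^n}\chi_{A_t^\star}\chi_{\{P(|x|)>s\}}$ and integrating in $t$ and $s$ then yields the claim.
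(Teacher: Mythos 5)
Your overall strategy --- layer cake representation, complementing the superlevel set of the increasing weight via $\chi_{\{P(r)>s\}}=1-\chi_{\{P(r)\le s\}}$, cancelling the equal total masses $|A_t^M|=|A_t^\star|$, and reducing to a comparison of the masses inside concentric balls --- is exactly the paper's proof. However, you state the key intermediate inequality in the wrong direction, twice. After the subtraction, what you actually need is
$$
\int_M \chi_{A_t^M}\,\chi_{\{r\le \rho_s\}}\ \le\ \int_{\mathbb{R}^n}\chi_{A_t^\star}\,\chi_{\{|x|\le\rho_s\}},
$$
i.e.\ the mass of $A_t^M$ inside the ball must be \emph{at most} (not ``at least'') its Euclidean counterpart, so that subtracting it from the common total gives the desired $\ge$ for the exteriors. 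And this $\le$ is also the direction that Lemma~\ref{sym1} actually provides --- you quote its conclusion reversed. As literally written, the claim ``mass inside is at least the Euclidean one'' is false in general and would produce the opposite of the lemma: Bishop--Gromov makes balls in $M$ smaller, not larger; the correct chain is $|A_t^M\cap B^M_{\rho_s}|\le\min(|A_t^M|,|B^M_{\rho_s}|)\le\min(|A_t^\star|,|B_{\rho_s}|)=|A_t^\star\cap B_{\rho_s}|$, the last equality because $A_t^\star$ and $B_{\rho_s}$ are concentric balls. Since your two sign errors are consistent with one another, flipping both restores the paper's argument verbatim and your final assembled inequality is the right one; but as submitted the middle step does not hold. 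One further small point: Lemma~\ref{sym1} is stated for positive weights and integrands of the form $|u|^p$ with $u\in C_c^\infty(M)$, so to apply it to the indicator pair $(\chi_{A_t^M},\chi_{\{r\le\rho_s\}})$ you should appeal to its proof (which uses only equimeasurability and the volume comparison), or just argue directly as above, rather than cite its statement.
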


	\begin{proof}
		Through the layer cake representation, we have
		$$
		\begin{aligned}
			\begin{aligned}
				\int_M |u|^pP(r)&=\int_0^\infty\int_0^\infty\int_M\chi_{\{|u|^p> t\}}\chi_{\{P(r)> s\}}~dtds\\
				&=\int_0^\infty\int_0^\infty\int_M[\chi_{\{|u|^p> t\}}-\chi_{\{|u|^p> t\}}\chi_{\{P(r)\le s\}}]~dtds,\\
				\int_{\mathbb{R}^n}(u^\star)^p P(|x|)&=\int_0^\infty\int_0^\infty\int_{\mathbb{R}^n}\chi_{\{(u^\star)^p> t\}}\chi_{\{P(|x|)> s\}}~dtds\\
				&=\int_0^\infty\int_0^\infty\int_{\mathbb{R}^n}[\chi_{\{(u^\star)^p> t\}}-\chi_{\{(u^\star)^p> t\}}\chi_{\{P(|x|)\le s\}}]~dtds,
			\end{aligned}
		\end{aligned}
		$$
		where $\chi$ is the characteristic function of domain. The Bishop-Gromov volume comparison theorem and the increasing of $P$ implies
		$$
		|\{q\in M|P(r(q))\le s\}|\le|\{x\in\mathbb{R}^n|P(|x|)\le s\}|,
		$$
		for fixed $s>0$. Note that $\{x\in\mathbb{R}^n|(u^\star)^p>t\}$ and $\{x\in\mathbb{R}^n|P(|x|)\le s\}$ are two balls with the same center, for fixed $t,s>0$. Then we have
		$$
		\int_M\chi_{\{|u|^p> t\}}\chi_{\{P(r)\le s\}}\le
		\int_{\mathbb{R}^n}\chi_{\{(u^\star)^q> t\}}\chi_{\{P(|x|)\le s\}}
		$$
		for $t,s>0$, which completes the proof.
	\end{proof}
	
	\section{Hardy inequality and Heisenberg-Pauli-Weyl inequality}
	
	The following theorem was proved in \cite[Theorem 1.3]{KMM} on Riemannian-Finsler manifolds with nonnegative Ricci curvature, we are giving the same proof of this statement here for completeness.
	
	\begin{thm}\cite[Theorem 1.3]{KMM}\label{H}
		Let $(M^n,g),n\ge3$ be a complete noncompact Riemannian manifold with nonnegative Ricci curvature and Euclidean volume growth. For $1<p<n$, we have
		$$
		\theta^{\frac pn}\frac{(n-p)^p}{p^p}\int_M\frac{u^p}{r^p}\le\int_M|Du|^p,\quad\forall u\in C_c^\infty(M).
		$$		
	\end{thm}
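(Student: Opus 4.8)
The plan is to reduce the inequality on $M$ to the classical Euclidean Hardy inequality recalled in the Introduction, using the three symmetrization tools assembled in the Preliminaries. The weight $P(r)=r^{-p}$ is continuous and strictly decreasing on $(0,+\infty)$, so Lemma \ref{sym1} applies directly to the left-hand side and transfers the singular-weight integral on $M$ to the corresponding integral for the Euclidean rearrangement $u^\star$:
$$
\int_M\frac{|u|^p}{r^p}=\int_M |u|^p\,P(r)\le\int_{\mathbb{R}^n}(u^\star)^p\,P(|x|)=\int_{\mathbb{R}^n}\frac{(u^\star)^p}{|x|^p}.
$$

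Next I would invoke the classical Hardy inequality in $\mathbb{R}^n$, applied to $u^\star$, to bound the rearranged weighted integral by the Dirichlet-type energy of $u^\star$, namely $\frac{(n-p)^p}{p^p}\int_{\mathbb{R}^n}\frac{(u^\star)^p}{|x|^p}\le\int_{\mathbb{R}^n}|Du^\star|^p$. Finally, the P\'olya-Szeg\"o inequality of Theorem \ref{PS}, raised to the $p$-th power, gives $\int_{\mathbb{R}^n}|Du^\star|^p\le\theta^{-p/n}\int_M|Du|^p$. Chaining the three estimates and multiplying through by $\theta^{p/n}\,(n-p)^p/p^p$ yields exactly the asserted inequality, the factor $\theta^{p/n}$ cancelling against the $\theta^{-p/n}$ coming from P\'olya-Szeg\"o.

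The argument has no genuinely delicate step; the one point requiring a word of care is that the rearrangement $u^\star$ is in general only Lipschitz (radially nonincreasing and compactly supported), not smooth, whereas the Euclidean Hardy inequality is stated for $u\in C_c^\infty(\mathbb{R}^n)$. This is handled by a routine density argument: since $u\in C_c^\infty(M)$ has $|Du|\in L^p(M)$, Theorem \ref{PS} guarantees $|Du^\star|\in L^p(\mathbb{R}^n)$, so $u^\star$ lies in the homogeneous Sobolev space on which the sharp Hardy inequality holds by approximation. The sharpness of the constant $\theta^{p/n}(n-p)^p/p^p$ is then inherited from the Euclidean case together with the sharp exponent $\theta^{1/n}$ in the P\'olya-Szeg\"o inequality.
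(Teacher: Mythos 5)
Your proposal is correct and follows exactly the paper's argument: Lemma \ref{sym1} with $P(r)=r^{-p}$, the classical Euclidean Hardy inequality applied to $u^\star$, and the P\'olya--Szeg\"o inequality of Theorem \ref{PS}, chained in the same order. Your added remark on the regularity of $u^\star$ and the density argument is a reasonable point of care that the paper leaves implicit, but it does not change the route.
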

	
	\begin{proof}
		Let $u^\star$ be the Euclidean rearrangement function of $u$. Through the classical Hardy inequality, we have
		$$
		\frac{(n-p)^p}{p^p}\int_{\mathbb{R}^n}\frac{(u^\star)^q}{|x|^q}\le\int_{\mathbb{R}^n}|Du^\star|^p.
		$$
		Due to the decreasing of $\frac{1}{t^p}$, combining Theorem \ref{PS} and Lemma \ref{sym1}, we have
		$$
		\frac{(n-p)^p}{p^p}\int_M\frac{u^p}{r^p}\le\frac{(n-p)^p}{p^p}\int_{\mathbb{R}^n}\frac{(u^\star)^p}{|x|^p}\le\int_{\mathbb{R}^n}|Du^\star|^q\le\theta^{-\frac pn}\int_M|Du|^p.
		$$
	\end{proof}
	
	Now we give the Heisenberg-Pauli-Weyl inequality. The classical Heisenberg-Pauli-Weyl inequality in Euclidean spaces states that
	$$
	(\int_{\mathbb{R}^n}|x|^2u^2)(\int_{\mathbb{R}^n}|Du|^2)\ge\frac{n^2}{4}(\int_{\mathbb{R}^n}u^2)^2,\quad\forall u\in C_c^\infty(\mathbb{R}^n).
	$$
	By the symmetrization method, one can obtain:
	
	\begin{thm}
		Let $(M^n,g),n\ge3$ be a complete noncompact Riemannian manifold with nonnegative Ricci curvature and Euclidean volume growth. Then
		$$
		(\int_M r^2u^2)
		(\int_M|Du|^2)\ge\theta^\frac 2n\frac{n^2}{4}(\int_M u^2)^2,\quad\forall u\in C_c^\infty(\mathbb{R}^n).
		$$
	\end{thm}

	\begin{proof}
		Let $u^\star$ be the Euclidean rearrangement function of $u$. Through the Heisenberg-Pauli-Weyl inequality in Euclidean space, one can get
		$$
		(\int_{\mathbb{R}^n}|x|^2(u^\star)^2)(\int_{\mathbb{R}^n}|Du^\star|^2)\ge\frac{n^2}{4}(\int_{\mathbb{R}^n}(u^\star)^2)^2.
		$$
		Note that ${t^2}$ is a increasing function, combining Theorem \ref{PS} and Lemma \ref{sym2}, it is easy to show that
		$$
		\begin{aligned}
		\theta^{-\frac2n}(\int_Mr^2(u)^2)(\int_M|Du|^2)&\ge(\int_{\mathbb{R}^n}|x|^2(u^\star)^2)(\int_{\mathbb{R}^n}|Du^\star|^2)\\
		&\ge\frac{n^2}{4}(\int_{\mathbb{R}^n}(u^\star)^2)^2=\frac{n^2}{4}(\int_M u^2)^2.
		\end{aligned}
		$$
	\end{proof}

	\section{Hardy-Sobolev inequality and CKN inequality}
	
	Similar to the previous section, we can obtain the following Hardy-Sobolev inequality.
	
	\begin{thm}\label{HS}
		Let $(M^n,g),n\ge3$ be a complete noncompact Riemannian manifold with nonnegative Ricci curvature and Euclidean volume growth. For $1<p<n$, we have
		$$
		\theta^{\frac{p}{n}}C_{HS}(s,p)(\int_{M}\frac{|u|^{\frac{n-s}{n-p}p}}{r^s})^{\frac{n-p}{n-s}}
		\le\int_{M}|Du|^p,\quad\forall u\in C_c^\infty(M),
		$$
		where $0\le s\le p$ and $C_{HS}(s,p)>0$ is the sharp constant of the corresponding Hardy-Sobolev inequality in Euclidean spaces.
	\end{thm}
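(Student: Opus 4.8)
The plan is to follow verbatim the symmetrization scheme already used for Theorem \ref{H} and for the Heisenberg-Pauli-Weyl inequality: transfer the inequality to Euclidean space via the rearrangement $u^\star$, invoke the known sharp Euclidean Hardy-Sobolev inequality there, and then pull both sides back to $M$ using the P\'olya-Szeg\"o inequality (Theorem \ref{PS}) for the gradient term and the rearrangement comparison (Lemma \ref{sym1}) for the weighted term, paying the volume-ratio factor $\theta^{p/n}$.

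First I would fix $u\in C_c^\infty(M)$ and let $u^\star$ be its Euclidean rearrangement. The sharp Hardy-Sobolev inequality in $\mathbb{R}^n$ gives
$$
C_{HS}(s,p)\Big(\int_{\mathbb{R}^n}\frac{(u^\star)^{\frac{n-s}{n-p}p}}{|x|^s}\Big)^{\frac{n-p}{n-s}}\le\int_{\mathbb{R}^n}|Du^\star|^p.
$$
For the right-hand side, Theorem \ref{PS} raised to the $p$-th power yields $\int_{\mathbb{R}^n}|Du^\star|^p\le\theta^{-\frac pn}\int_M|Du|^p$.

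Next, for the weighted left-hand side I would set $P(t)=t^{-s}$ and $q=\frac{n-s}{n-p}p$. Since $s\ge0$ the function $P$ is positive, continuous and nonincreasing on $(0,+\infty)$, and since $s\le p$ one has $\frac{n-s}{n-p}\ge1$, hence $q\ge p>1$ lies in the admissible range of Lemma \ref{sym1}. That lemma then gives
$$
\int_M\frac{|u|^{\frac{n-s}{n-p}p}}{r^s}\le\int_{\mathbb{R}^n}\frac{(u^\star)^{\frac{n-s}{n-p}p}}{|x|^s}.
$$
Because $\frac{n-p}{n-s}>0$, raising both sides to this exponent preserves the direction of the inequality. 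Chaining the three estimates and multiplying through by $\theta^{\frac pn}$ produces exactly the asserted inequality.

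I do not expect a genuine obstacle; the only points requiring care are precisely the two verifications above, namely that the weight $t^{-s}$ is nonincreasing (so that Lemma \ref{sym1}, and not Lemma \ref{sym2}, is the relevant comparison) and that $q=\frac{n-s}{n-p}p\in[1,\infty)$, both of which follow from $0\le s\le p<n$. The degenerate case $s=0$ reduces to the Sobolev inequality, where $P\equiv1$ and the comparison of the left-hand sides is simply the equality $\|u\|_{L^q(M)}=\|u^\star\|_{L^q(\mathbb{R}^n)}$ under rearrangement, so no separate argument is needed.
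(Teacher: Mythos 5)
Your proposal is correct and is precisely the argument the paper intends: the paper's own proof of Theorem \ref{HS} is simply the remark that it is identical to the proof of Theorem \ref{H}, i.e.\ the same chain of the Euclidean sharp inequality for $u^\star$, Lemma \ref{sym1} with the nonincreasing weight $t^{-s}$ for the left-hand side, and Theorem \ref{PS} for the gradient term. Your two verifications (that $t^{-s}$ is nonincreasing for $0\le s\le p$ and that the exponent $\frac{n-s}{n-p}p\ge 1$) are exactly the details the paper leaves implicit.
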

	
	\begin{proof}
		The proof is indentity to Theorem \ref{H}, so we omit it here.
	\end{proof}
	
	Through the Hardy-Sobolev inequality above, we have the following CKN inequality:
	
	\begin{thm}
		Let $(M^n,g),n\ge3$ be a complete noncompact Riemannian manifold with nonnegative Ricci curvature and Euclidean volume growth. Then we have
		$$
		C(a,b,\theta)(\int_{M} r^{-bp}|u|^p)^\frac2p\le \int_{M} r^{-2a}|Du|^2,
		$$
		where $0\le a<\frac{n-2}{2}(1-\sqrt{1-\theta^\frac2n}), a\le b\le a+1$, $p=\frac{2n}{n-2+2(b-a)}$, and $C(a,b,\theta)>0$ is a constant depending on $a,b$ and $\theta$.		
	\end{thm}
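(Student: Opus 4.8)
The plan is to reduce the weighted gradient integral $\int_M r^{-2a}|Du|^2$ to an unweighted one via the substitution $w=r^{-a}u$, so that the Hardy-Sobolev inequality (Theorem \ref{HS}) and the Hardy inequality (Theorem \ref{H}) already established on $M$ can be invoked. Since $u\in C_c^\infty(M)$ and $a\ge 0$, the function $w=r^{-a}u$ is admissible, and the whole argument hinges on comparing $\int_M r^{-2a}|Du|^2$ with $\int_M|Dw|^2$. First I would expand the gradient: using $|Dr|=1$ away from the cut locus, $|Dw|^2=r^{-2a}|Du|^2-2ar^{-2a-1}u\langle Dr,Du\rangle+a^2r^{-2a-2}u^2$, whence $\int_M r^{-2a}|Du|^2=\int_M|Dw|^2-a^2\int_M r^{-2a-2}u^2+a\int_M r^{-2a-1}\langle Dr,D(u^2)\rangle$, where I used $2u\langle Dr,Du\rangle=\langle Dr,D(u^2)\rangle$.

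Next I would integrate the cross term by parts, $\int_M r^{-2a-1}\langle Dr,D(u^2)\rangle=-\int_M u^2\,\mathrm{div}(r^{-2a-1}Dr)$ with $\mathrm{div}(r^{-2a-1}Dr)=-(2a+1)r^{-2a-2}+r^{-2a-1}\Delta r$. This is where nonnegative Ricci curvature enters, through the Laplacian comparison $\Delta r\le (n-1)/r$, which gives $\mathrm{div}(r^{-2a-1}Dr)\le (n-2-2a)r^{-2a-2}$. Since $a\ge 0$, this produces $\int_M r^{-2a}|Du|^2\ge\int_M|Dw|^2-a(n-2-a)\int_M r^{-2a-2}u^2=\int_M|Dw|^2-a(n-2-a)\int_M \frac{w^2}{r^2}$, using $r^{-2a-2}u^2=w^2/r^2$.

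I would then apply the Hardy inequality of Theorem \ref{H} with $p=2$ to $w$, namely $\int_M \frac{w^2}{r^2}\le\frac{4}{(n-2)^2}\theta^{-2/n}\int_M|Dw|^2$, to obtain $\int_M r^{-2a}|Du|^2\ge\kappa\int_M|Dw|^2$ with $\kappa=1-\frac{4a(n-2-a)}{(n-2)^2}\theta^{-2/n}$. A short computation (solving the quadratic $4a^2-4a(n-2)+\theta^{2/n}(n-2)^2>0$) shows $\kappa>0$ precisely when $a<\frac{n-2}{2}\big(1-\sqrt{1-\theta^{2/n}}\big)$, which is exactly the stated range of $a$; note this reduces to the Euclidean restriction $a<\frac{n-2}{2}$ as $\theta\to 1$.

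Finally I would apply the Hardy-Sobolev inequality (Theorem \ref{HS}) with gradient exponent $2$ and weight $s=(b-a)p$ to $w$. Writing $\beta=b-a\in[0,1]$, one checks $0\le s\le 2$, that the Hardy-Sobolev Lebesgue exponent $\frac{n-s}{n-2}\cdot 2$ equals $p=\frac{2n}{n-2+2\beta}$, and that $\frac{n-2}{n-s}=\frac{2}{p}$; translating back through $w=r^{-a}u$ gives $\int_M \frac{|w|^p}{r^s}=\int_M r^{-bp}|u|^p$. Combining everything yields $\int_M r^{-2a}|Du|^2\ge\kappa\,\theta^{2/n}C_{HS}(s,2)\big(\int_M r^{-bp}|u|^p\big)^{2/p}$, so that $C(a,b,\theta)=\kappa\,\theta^{2/n}C_{HS}((b-a)p,2)>0$ works. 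The main obstacle I anticipate is justifying the integration by parts across the cut locus of $r$: although $r$ is merely Lipschitz there, the comparison $\Delta r\le(n-1)/r$ holds in the distributional sense and the cut locus is null, so the inequality survives; making this precise (or citing the standard distributional Laplacian comparison) is the one delicate point, the rest being the exponent bookkeeping verified above.
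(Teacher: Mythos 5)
Your proposal is correct and follows essentially the same route as the paper: the substitution $w=r^{-a}u$, integration by parts with the Laplacian comparison $r\Delta r\le n-1$ to get $\int_M r^{-2a}|Du|^2\ge\int_M|Dw|^2-a(n-2-a)\int_M w^2/r^2$, then Theorem \ref{H} with $p=2$ followed by Theorem \ref{HS} with $s=(b-a)p$, yielding the same constant $\big(\theta^{2/n}-\tfrac{4a(n-2-a)}{(n-2)^2}\big)C_{HS}(s,2)$. Your added verifications (the quadratic giving the threshold $a<\tfrac{n-2}{2}(1-\sqrt{1-\theta^{2/n}})$, the exponent bookkeeping, and the distributional handling of the cut locus) are sound and in fact more careful than the paper's write-up.
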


	\begin{proof}
		The proof is the same as \cite[Corollary 15.1.1]{GM2}. For $u\in C_c^\infty(M)$, let $w(x)=\frac{u(x)}{r^a}$. Through the Stoke's formula and 
		$r\Delta r\le (n-1)$ on $M$, we can obtain
		$$
		\begin{aligned}
		\int_M\frac{|D(wr^a)|^2}{r^{2a}}&=\int_M\frac{1}{r^{2a}}
		(a^2r^{2a-2}w^2+r^{2a}|Dw|^2+2awr^{2a-1}\langle Dr,Dw\rangle)\\
		&=\int_M|Dw|^2+a^2\int_M\frac{w^2}{r^2}+a\int_M\frac{\langle D(w^2),Dr\rangle}{r}\\
		&=\int_M|Dw|^2+\int_Ma(a+1-r\Delta r)\frac{w^2}{r^2}\\
		&\ge\int_M|Dw|^2-a(n-2-a)\int_M\frac{w^2}{r^2}.
		\end{aligned}
		$$		
		Let $\gamma=a(n-2-a)$, by Theorem \ref{H}, one can get
		$$
		\int_M r^{-2a}|Du|^2\ge(1-\gamma\frac{4}{(n-2)^2}\theta^{-\frac{2}{n}})\int_M|Dw|^2>0.
		$$
		Through the Hardy-Sobolev inequality in Theorem \ref{HS}, there exist a constant $C(a,b,\theta)=(\theta^\frac2n-\gamma\frac{4}{(n-2)^2})C_{HS}(\frac{2n(b-a)}{n-2+2(b-a)},2)>0$ such that
		$$
		C(a,b,\theta)(\int_{M} r^{-bp}|u|^p)^\frac2p\le \int_{M} r^{-2a}|Du|^2.
		$$
		
	\end{proof}

\section*{Acknowledgements}

	We would like to thank Prof. A. Krist\'aly for his useful communication. 
	
	\newpage
	\bibliographystyle{plain}
	\bibliography{DLLhardy}

	\noindent\mbox{Yuxin Dong and Lingen Lu} \\
	\mbox{School of Mathematical Sciences}\\
	\mbox{220 Handan Road, Yangpu District}\\
	\mbox{Fudan University}\\
	\mbox{Shanghai, 20043}\\
	\mbox{P.R. China}\\
	\mbox{\textcolor{blue}{yxdong@fudan.edu.cn}}\\
	\mbox{\textcolor{blue}{lulingen@fudan.edu.cn}}
	
	\noindent\mbox{Hezi Lin}\\
	\mbox{School of Mathematics and Statistics \&  FJKLMAA}\\
	\mbox{Fujian Normal University}\\
	\mbox{Fuzhou,  350108}\\
	\mbox{P.R. China}\\
	\mbox{\textcolor{blue}{ lhz1@fjnu.edu.cn}}

\end{document}